\newtheorem{thm}{Theorem}[section]
\newtheorem{cor}[thm]{Corollary}
\newtheorem{lem}[thm]{Lemma}
\newtheorem{prop}[thm]{Proposition}
\title{A Family of Unitary Operators Satisfying a Poisson-type Summation
Formula}
\author{Dmitry Faifman%
\thanks{School of Mathematical Sciences, Tel-Aviv University, Tel Aviv 69978,
Israel. Partially supported by ISF grant 387/09%
}}
\date{}
\begin{document}
\maketitle
\begin{abstract}
We consider a weighted form of the Poisson summation formula. We prove that under certain decay rate conditions on the weights, there exists a unique unitary Fourier-Poisson operator which satisfies this formula. We next find the diagonal form of this operator, and prove that under weaker conditions on the weights, a unique unitary operator still exists which satisfies a Poisson summation formula in operator form. We also generalize the interplay between the Fourier transform and derivative to those Fourier-Poisson operators.
\end{abstract}

\section{Introduction}
The classical summation formula of Poisson states that, for a
well-behaved function $f:\mathbf{R}\rightarrow\mathbf{C}$ and its
(suitably scaled) Fourier Transform $\hat{f}$ we have the relation
\[\sum_{n=-\infty}^{\infty}f(n)=\sum_{n=-\infty}^{\infty}\hat{f}(n)\]
Fix $x>0$, and replace $f(t)$ with $\frac{1}{x}f(t/x)$.
The Poisson formula is linear and trivial for odd $f$, so we assume
$f$ is even. Also, assume $f(0)=0$. Then
\begin{equation}\sum_{n=1}^{\infty}\hat
f(nx)=\frac{1}{x}\sum_{n=1}^{\infty}f(n/x)\end{equation} We
discussed in \cite{Faifman} the extent to which this summation formula, which
involves sums over lattices in $\mathbf R$, determines the Fourier
transform of a function. Taking a weighted form of the Poisson
summation formula as our starting point, we define a generalized
Fourier-Poisson transform, and show that under certain conditions it
is a unitary operator on $L^2[0,\infty)$. As a sidenote, we show a
peculiar family of unitary operators on $L^2[0, \infty)$ defined by
series of the type $f(x)\mapsto \sum a_n f(nx)$.

\section{Some Notation, and a summary of results}
The Fourier transform maps odd functions to odd functions, rendering The Poisson summation formula trivial. Thus we only consider square-integrable even functions, or equivalently, all functions belong to
$L^2[0,\infty)$. \\Denote by $\delta_n$, $n\geq1$ the sequence given
by $\delta(1)=1$ and $\delta(n)=0$ for $n>1$, and the convolution of
sequences as $(a\ast b)_k=\sum_{mn=k}a_mb_n$.
\\Define the (possibly unbounded) operator
\begin{equation}\label{def1}T(a_n)f(x)=\sum_{n=1}^\infty a_nf(nx)\end{equation} It holds that
$T_{b_n}T_{a_n}f=T_{a_n\ast b_n}f$ whenever the series in both sides
are well defined and absolutely convergent.
\\Let $a_n$, $b_n$, $n\geq1$ be two sequences, which satisfy  $a\ast
b=\delta$. \\This is equivalent to saying that $L(s;a_n)L(s;b_n)=1$
where $L(s;c_n)=\sum_{n=1}^\infty\frac{c_n}{n^s}$. For a given
$a_n$ with $a_1\neq0$, its convolutional inverse is uniquely defined via those
formulas. \\Then, the formal inverse transform to $T_{a_n}$  is
given simply by $T_{b_n}$. \\Note that the convolutional inverse of
the sequence $a_n=1$ is the M\"{o}bius function $\mu(n)$,
defined as
\[\mu(n)=\left\{\begin{array}{ll}(-1)^{\sharp\{p|n \mbox{ prime}\}},& n\mbox{ square-free}\\
                                    0,& d^2|n\end{array}\right.\]
Also, define the operator
\[Sf(x)=\frac{1}{x}f\left(\frac{1}{x}\right)\] - a unitary involution
on $L^2[0,\infty)$, which is straightforward to check.
\\\\In terms of $S$ and $T$, the Poisson summation formula for the Fourier
Transform can be written as following:
\[T(e_n)\hat{f}(x)=ST(e_n)f\]
where $e_n=1$ for all $n$. This suggests a formula for the Fourier
transform:
\begin{equation}\label{Fourier_definition}\hat{f}(x)=T(\mu_n)ST(e_n)f(x)\end{equation}
with $\mu_n=\mu(n)$ the M\"{o}bius function. We would like to mention
that Davenport in \cite{Davenport} established certain identities,
such as
\[\sum_{n=1}^{\infty}\frac{\mu(n)}{n}\{nx\}=-\frac{1}{\pi}\sin(2\pi x)\]
which could be used to show that formula (\ref{Fourier_definition})
actually produces the Fourier transform of a (zero-integral) step
function.
\\\\We define the (possibly unbounded)
Fourier-Poisson Transform associated with $(a_n)$ as
\begin{equation}\mathcal{F}(a_n)f(x)=T(\overline{a_n})^{-1}ST(a_n)f(x)\end{equation}
This is clearly an involution, and produces  the operator
$Sf(x)=\frac{1}{x}f\left(\frac{1}{x}\right)$  for $a_n=\delta_n$,
and (non-formally) the Fourier Transform for $a_n=1$.
Note that both are unitary operators on $L_2[0,\infty)$.
In the following, we will see how this definition can be carried out
rigorously. First we give conditions on $(a_n)$ that produce a unitary operator satisfying a pointwise Poisson summation formula, as was the case with Fourier transform (Theorem \ref{fourier1}). Then we relax the conditions, which produces a unitary operator satisfying a weaker operator-form Poisson summation formula (Theorem \ref{fourier2}).
\\\\\textit{Remark.} A similar approach appears in \cite{Baez},\cite{Burnol} and
\cite{Duffin}, where it is used to study the Fourier Transform and
certain variants of it.

\section{The Fourier-Poisson operator is unitary}
We prove that under certain rate-of-growth assumptions on the
coefficients $a_n$ and its convolution-inverse $b_n$, it holds that $\mathcal F(a_n)=T(\overline{a_n})^{-1}ST(a_n)$ is unitary.\\In the following, $f(x)=O(g(x))$ will be understood to mean at $x\rightarrow\infty$ unless otherwise indicated.
\begin{lem}\label{bounded_lemma} Assume
\begin{equation}\label{init_condition}\sum\frac{|a_n|}{\sqrt n}<\infty\end{equation}
holds, and let $f\in C(0,\infty)$ satisfy $f=O(x^{-1-\epsilon})$ for some $\epsilon>0$. Then $T(a_n)f$ as defined in (\ref{def1}) is a continuous function satisfying $T(a_n)f(x)=O(x^{-1-\epsilon})$. Moreover, $T(a_n)$ extends to a bounded operator on $L^2[0, \infty)$, and
$\|T\|\leq\sum\frac{|a_n|}{\sqrt n}$. \end{lem}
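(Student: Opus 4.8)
The plan is to prove the three claims — pointwise convergence with decay, continuity, and $L^2$-boundedness — somewhat separately, since the pointwise statement and the operator-norm bound rely on different elementary inequalities built from the single hypothesis $\sum |a_n|/\sqrt n < \infty$.

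First I would handle the pointwise decay estimate. Since $f = O(x^{-1-\epsilon})$, there is a constant $C$ with $|f(x)| \le C x^{-1-\epsilon}$ for large $x$; combined with continuity on $(0,\infty)$ one gets a global bound of the form $|f(x)| \le C' x^{-1-\epsilon}$ valid for all $x$ bounded away from $0$, or more carefully a bound that lets me control $|f(nx)|$ for each fixed $x>0$. Then for fixed $x$, $|a_n f(nx)| \le C |a_n| (nx)^{-1-\epsilon} = C x^{-1-\epsilon} |a_n| n^{-1-\epsilon}$, and since $|a_n| n^{-1-\epsilon} \le |a_n|/\sqrt n$ for $n \ge 1$ (as $\epsilon > 0$ makes $n^{-1-\epsilon} \le n^{-1/2}$), the series $\sum_n a_n f(nx)$ converges absolutely and is dominated by $C x^{-1-\epsilon}\sum |a_n|/\sqrt n$. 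This simultaneously gives absolute convergence and the desired bound $T(a_n)f(x) = O(x^{-1-\epsilon})$. Continuity of the sum then follows from a standard uniform-convergence argument: on any compact subinterval $[\alpha,\beta] \subset (0,\infty)$ the same domination $|a_n f(nx)| \le C \alpha^{-1-\epsilon} |a_n| n^{-1-\epsilon}$ is uniform in $x$, so by the Weierstrass $M$-test the series converges uniformly and its sum is continuous.

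The main work is the $L^2$-boundedness with the claimed norm bound. The clean way is to treat $T(a_n)$ as a superposition $T(a_n) = \sum_n a_n D_n$ of the individual dilation operators $D_n f(x) = f(nx)$, and to estimate $\|D_n\|$ on $L^2[0,\infty)$. A change of variables $u = nx$ gives $\|D_n f\|_2^2 = \int_0^\infty |f(nx)|^2\, dx = \tfrac1n \int_0^\infty |f(u)|^2\, du$, so $\|D_n\| = n^{-1/2}$ exactly. Then by the triangle inequality in $L^2$, for $f$ in the dense subspace of compactly supported continuous functions (on which the series converges in $L^2$, justified by the pointwise estimates above together with dominated convergence for the partial sums), $\|T(a_n)f\|_2 \le \sum_n |a_n|\,\|D_n f\|_2 = \left(\sum_n \frac{|a_n|}{\sqrt n}\right)\|f\|_2$. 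This yields both boundedness with $\|T\| \le \sum |a_n|/\sqrt n$ and, by density, the extension to all of $L^2[0,\infty)$.

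The step I expect to be the main obstacle is reconciling the pointwise definition of $T(a_n)f$ with its action as a bounded operator — that is, checking that the $L^2$-extension genuinely agrees with the pointwise sum on the class of functions where both make sense, and that the triangle-inequality bound is applied to a series that actually converges in $L^2$-norm rather than merely pointwise. To make this rigorous I would first establish the norm bound for the finite partial sums $T_N = \sum_{n=1}^N a_n D_n$, observe they form a Cauchy sequence in operator norm since $\|T_N - T_M\| \le \sum_{n=M+1}^N |a_n|/\sqrt n \to 0$, and identify the operator-norm limit with the pointwise sum on the dense subspace using the decay estimate from the first part. The tail control provided by $\sum |a_n|/\sqrt n < \infty$ is exactly what is needed to close both the completeness argument and the identification.
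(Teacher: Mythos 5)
Your proposal is correct and takes essentially the same route as the paper: the heart of both arguments is the exact computation $\|f(n\cdot)\|_{L^2}=n^{-1/2}\|f\|_{L^2}$ together with summability of $\sum |a_n|/\sqrt{n}$, and your triangle inequality over the dilation operators $D_n$ is just the paper's Cauchy--Schwarz bound on the cross terms $|\langle f(mx),f(nx)\rangle|\leq \frac{1}{\sqrt{mn}}\|f\|^2$ before squaring. The only differences are cosmetic: you realize the extension as an operator-norm limit of the partial sums $T_N$ rather than by density from continuous functions with the stated decay, and you spell out the domination/Weierstrass argument for continuity and pointwise decay that the paper dismisses as ``straightforward to verify.''
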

\begin{proof} Consider a
continuous function $f=O(x^{-1-\epsilon})$. It is straightforward to verify that $T(a_n)f$ is well-defined, continuous and $T(a_n)f(x)=O(x^{-1-\epsilon})$. Now apply Cauchy-Schwartz: \[|\langle f(mx), f(nx)\rangle|\leq \frac{1}{\sqrt
 {mn}}\|f\|^2\]
 implying
 \[\|T(a_n)f\|^2\leq\sum_{m,n}|a_m||a_n||\langle f(mx),
 f(nx)\rangle|\leq\left(\sum_n\frac{|a_n|}{\sqrt
 n}\right)^2\|f\|^2\]  So $T(a_n)$ can be extended as a bounded operator to all $L^2$, and $\|T\|\leq\sum\frac{|a_n|}{\sqrt n}$.
 \end{proof}
Now, consider a sequence $a_n$ together with its
convolution-inverse $b_n$. In all the following, we assume that
$a_n$, $b_n$ both satisfty (\ref{init_condition}) (as an example, consider
$a_n=n^{-\lambda}$ and $b_n=\mu(n)n^{-\lambda}$ with $\lambda>0.5$).
\\Then $T(a_n)$,
$T(\overline{b_n})$ are both bounded linear operators, and we define the
Fourier-Poisson operator \[\mathcal F(a_n)=T(\overline{b_n})ST(a_n)\] Note
that $T(a_n)^{-1}=T(b_n)$ (and likewise $T(\overline{a_n})^{-1}=T(\overline{b_n})$), which is easy to verify on the dense subset of continuous functions with compact support.
\begin{cor}\label{pointwise}   Assume that $\sum|a_n|n^\epsilon<\infty$ for some $\epsilon>0$ and $(b_n)$ satisfies (\ref{init_condition}). Take a continuous $f$ satisfying $f(x)=O(x^{-1-\epsilon})$ as $x\rightarrow\infty$ and  $f(x)=O(x^\epsilon)$ as $x\rightarrow0$ for some $\epsilon>0$. Then\\
(a) $\mathcal F(a_n)f$ is continuous and $\mathcal F(a_n)f(x)=O(x^{-1-\epsilon})$.\\
(b) The formula $\sum \overline{a_n}\mathcal F(a_n)f(nx)=(1/x)\sum a_n f(n/x)$ holds pointwise.\end{cor}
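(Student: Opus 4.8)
The plan is to track how the two-sided decay hypotheses on $f$ propagate through the three factors of $\mathcal F(a_n)=T(\overline{b_n})\,S\,T(a_n)$, and then to make the formal identity $T(\overline{a_n})T(\overline{b_n})=\mathrm{Id}$ rigorous at the level of pointwise-convergent series. Throughout I would first record the single uniform bound $|f(t)|\le C\min(t^{\epsilon},t^{-1-\epsilon})$ valid for all $t>0$, which follows from continuity of $f$ on $(0,\infty)$ together with the two asymptotic hypotheses.

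For part (a), the first step is to show that $g_1:=T(a_n)f$ lies in the same class as $f$. At infinity this is essentially Lemma \ref{bounded_lemma}: since $|f(nx)|\le C(nx)^{-1-\epsilon}$ and $\sum|a_n|n^{-1-\epsilon}\le\sum|a_n|n^{\epsilon}<\infty$, the series converges uniformly on $[1,\infty)$ and $g_1(x)=O(x^{-1-\epsilon})$. Near $0$ I would split $\sum_n a_n f(nx)$ at the threshold $n\sim 1/x$: the terms with $nx\le 1$ contribute at most $Cx^{\epsilon}\sum_n|a_n|n^{\epsilon}=O(x^{\epsilon})$, while for $nx>1$ I would use $n^{-1-\epsilon}\le x^{1+2\epsilon}n^{\epsilon}$ (which holds precisely because $1/n<x$) to bound their total by $Cx^{-1-\epsilon}\cdot x^{1+2\epsilon}\sum_n|a_n|n^{\epsilon}=O(x^{\epsilon})$. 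This is exactly where the stronger summability hypothesis $\sum|a_n|n^{\epsilon}<\infty$, rather than merely (\ref{init_condition}), is genuinely used. Hence $g_1$ is continuous with $g_1=O(x^{-1-\epsilon})$ at infinity and $g_1=O(x^{\epsilon})$ at $0$. Applying $S$ merely interchanges the two endpoints: $Sg_1(x)=\frac{1}{x}g_1(1/x)$ is immediately seen to satisfy $Sg_1=O(x^{-1-\epsilon})$ at infinity and $Sg_1=O(x^{\epsilon})$ at $0$, so $g:=ST(a_n)f$ is again continuous and in the same class. Finally, since $\overline{b_n}$ satisfies (\ref{init_condition}) and $g$ is continuous with $g=O(x^{-1-\epsilon})$, Lemma \ref{bounded_lemma} applies directly and yields that $\mathcal F(a_n)f=T(\overline{b_n})g$ is continuous and $O(x^{-1-\epsilon})$, proving (a).

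For part (b), I would first rewrite both sides in operator form. The right-hand side is $\frac{1}{x}\sum a_n f(n/x)=ST(a_n)f(x)=g(x)$, while the left-hand side is $\sum_n\overline{a_n}\,\mathcal F(a_n)f(nx)=T(\overline{a_n})\bigl(T(\overline{b_n})g\bigr)(x)$. Thus the claim is precisely the pointwise identity $T(\overline{a_n})T(\overline{b_n})g=g$. Since $\overline{a}\ast\overline{b}=\overline{a\ast b}=\overline{\delta}=\delta$, this is the formal statement $T(\overline{a})T(\overline{b})=T(\delta)=\mathrm{Id}$, and the only real work is to justify the rearrangement. The key estimate is the global bound $|g(t)|\le Ct^{-1/2}$ for all $t>0$, which follows from $\min(t^{\epsilon},t^{-1-\epsilon})\le t^{-1/2}$ together with the two-sided decay of $g$ established in (a). For each fixed $x>0$ it gives
\[\sum_{m,n}|a_m|\,|b_n|\,|g(mnx)|\le Cx^{-1/2}\sum_{m,n}\frac{|a_m|\,|b_n|}{\sqrt{mn}}=Cx^{-1/2}\left(\sum_m\frac{|a_m|}{\sqrt m}\right)\left(\sum_n\frac{|b_n|}{\sqrt n}\right)<\infty,\]
where both factors are finite by (\ref{init_condition}) (for $a$ this follows from $\sum|a_n|n^{\epsilon}<\infty$). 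Absolute convergence of this double series lets me reindex by $k=mn$ and invoke the composition rule of Section 2, obtaining $T(\overline{a_n})T(\overline{b_n})g=T(\overline{a}\ast\overline{b})g=T(\delta)g=g$ pointwise, which is the asserted formula.

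I expect the main obstacle to be the rearrangement in part (b): everything reduces to producing a bound on $g$ that is simultaneously strong enough and uniform enough over all of $(0,\infty)$ to force absolute convergence of the double Dirichlet-type series, so that the formal cancellation $T(\overline{a})T(\overline{b})=\mathrm{Id}$ becomes a legitimate pointwise identity. The companion delicate point is the decay estimate at $0$ for $T(a_n)f$ in part (a), since that is where the hypothesis $\sum|a_n|n^{\epsilon}<\infty$ is actually needed rather than the weaker (\ref{init_condition}).
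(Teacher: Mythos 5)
Your proof is correct, and it diverges from the paper's in an interesting way. Part (a) is exactly the paper's argument: the paper asserts that the two-sided decay is preserved by $T(a_n)$ (using $\sum|a_n|n^\epsilon<\infty$) and by $S$, then invokes Lemma \ref{bounded_lemma} for the final application of $T(\overline{b_n})$; your splitting of the sum at $n\sim 1/x$ simply fills in what the paper leaves as ``easy to see.'' Part (b), however, is a genuinely different route. The paper argues functional-analytically: since $T(\overline{a_n})T(\overline{b_n})=I$ as bounded operators on $L^2$ (verified earlier on a dense subset), the identity $T(\overline{a_n})\mathcal F(a_n)f=ST(a_n)f$ holds almost everywhere, and both sides are continuous functions by part (a) and Lemma \ref{bounded_lemma}, hence they agree everywhere. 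You instead prove the identity directly at the level of series: the global bound $|g(t)|\le Ct^{-1/2}$, obtained from $\min(t^{\epsilon},t^{-1-\epsilon})\le t^{-1/2}$, forces absolute convergence of the double series $\sum_{m,n}\overline{a_m}\,\overline{b_n}\,g(mnx)$, which can then be reindexed by $k=mn$ and collapsed via $\overline{a}\ast\overline{b}=\delta$. Your version is more elementary and self-contained---it never passes through $L^2$ or an a.e.\ statement, and it shows explicitly where the convolution-inverse relation is used pointwise---at the cost of redoing a Fubini-type estimate that the paper gets essentially for free from the operator identities already established in Section 3. Both arguments are sound; the paper's is shorter given its machinery, yours would survive even without the $L^2$ framework.
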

\begin{proof} (a) It is easy to see that all the properties of the function are preserved by applying $T(a_n)$ (using $\sum|a_n|n^\epsilon<\infty$) and then by $S$. Then by Lemma \ref{bounded_lemma} application of $T(\overline{b_n})$ to $ST(a_n)$ completes the proof.
\\(b) By Lemma \ref{bounded_lemma}, we get an equality a.e. of two continuous functions: \[T(\overline{a_n})\mathcal F(a_n)f=ST(a_n)f\qedhere\]
\end{proof}
\begin{thm}\label{fourier1} Assume that $\sum|a_n|n^\epsilon<\infty$ for some $\epsilon>0$ and $(b_n)$ satisfies (\ref{init_condition}). Then
$\mathcal F(a_n)$ is a unitary operator.\end{thm}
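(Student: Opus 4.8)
The plan is to diagonalize $T(a_n)$ and $S$ simultaneously by passing to the multiplicative (Mellin) picture, where $T$ becomes multiplication and $S$ a reflection, so that unitarity reduces to checking that a single symbol has modulus one. First I would record that $\mathcal F(a_n)=T(\overline{b_n})ST(a_n)$ is bounded on $L^2[0,\infty)$: the factor $S$ is a unitary involution, while $T(a_n)$ and $T(\overline{b_n})$ are bounded by Lemma~\ref{bounded_lemma}, since $\sum|a_n|n^\epsilon<\infty$ forces $\sum|a_n|/\sqrt n<\infty$ and $(\overline{b_n})$ inherits (\ref{init_condition}) from $(b_n)$. It therefore suffices to prove $\mathcal F^*\mathcal F=\mathcal F\mathcal F^*=I$, which I will read off from the diagonal form.

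Next I would introduce the unitary $V\colon L^2[0,\infty)\to L^2(\mathbf R)$ given by the substitution $x=e^t$ (the isometry $f\mapsto e^{t/2}f(e^t)$ onto $L^2(\mathbf R,dt)$) followed by the Fourier transform in $t$. A change of variables shows that $V$ turns $S$ into the reflection $\rho\colon h(\xi)\mapsto h(-\xi)$, and turns the multiplicative shift $f(x)\mapsto f(nx)$ into the additive shift $g(t)\mapsto g(t+\log n)$, i.e. into multiplication by $n^{-1/2+i\xi}$ on the Fourier side. Summing over $n$, the operator $VT(c_n)V^{-1}$ is multiplication by the symbol $m_c(\xi)=\sum_n c_n n^{-1/2+i\xi}$, a bounded continuous function whenever $\sum|c_n|/\sqrt n<\infty$ — precisely the estimate behind Lemma~\ref{bounded_lemma}. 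The routine point is to confirm that this identification, checked on a dense class of nice functions where interchanging the sum with $V$ is legitimate, extends to all of $L^2$ by boundedness.

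Conjugating the three factors, $V\mathcal F(a_n)V^{-1}=M_{m_{\overline b}}\,\rho\,M_{m_a}$, where $M_\phi$ denotes multiplication by $\phi$; this acts as $h(\xi)\mapsto w(\xi)h(-\xi)$ with $w(\xi)=m_{\overline b}(\xi)\,m_a(-\xi)$, so $V\mathcal F V^{-1}=M_w\rho$. As $\rho$ is unitary, $\mathcal F$ is unitary exactly when $|w|\equiv1$. This is where $a\ast b=\delta$ enters: both Dirichlet series converge absolutely on the line $\Re s=\frac{1}{2}$, so their product is the series of $a\ast b=\delta$, giving $m_a(\xi)m_b(\xi)\equiv1$; together with $m_{\overline b}(\xi)=\overline{m_b(-\xi)}$ (conjugate the series) this yields $|w(\xi)|^2=|m_b(-\xi)|^2|m_a(-\xi)|^2=|m_a(-\xi)m_b(-\xi)|^2=1$. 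Hence $M_w$, and therefore $M_w\rho$, is unitary.

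I expect the main obstacle to be making the diagonalization rigorous — that $T(c_n)$ is genuinely the Mellin multiplier $M_{m_c}$ as a bounded operator, not merely formally on shifts — and verifying $m_am_b\equiv1$ pointwise on $\Re s=\frac{1}{2}$, which is exactly where the absolute convergence guaranteed by (\ref{init_condition}) for $(b_n)$ and by $\sum|a_n|n^\epsilon<\infty$ for $(a_n)$ is needed. Everything after that is bookkeeping with commuting multiplication operators. The same cancellation can be phrased transform-free: the change of variables gives the adjoint identity $T(c_n)^*=ST(\overline{c_n})S$, and the normality that the diagonalization makes manifest collapses $\mathcal F^*\mathcal F=A^*BB^*A$, with $A=T(a_n)$ and $B=T(b_n)=A^{-1}$, to the telescoping product $(A^*A)(A^*A)^{-1}=I$.
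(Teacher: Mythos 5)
Your argument is correct, but it is not the route the paper takes for this theorem: it is, almost verbatim, the diagonalization the paper develops later, in Section 5. The paper's own proof of Theorem~\ref{fourier1} stays entirely in $L^2[0,\infty)$: it sets $G=ST(\overline{b_n})ST(a_n)$, expands $\langle Gf_1,Gf_2\rangle$ for compactly supported continuous $f_1,f_2$ into an absolutely convergent quadruple series (this pointwise expansion is exactly where the hypothesis $\sum|a_n|n^{\epsilon}<\infty$ is used, via the decay $ST(a_n)f=O(x^{-1-\epsilon})$), regroups the terms over coprime pairs $(p,q)$, and observes that the inner sums $\sum_{nl=up}a_n b_l=(a\ast b)_{up}$ and $\sum_{mk=uq}\overline{a_k b_m}$ vanish unless $u=p=q=1$; hence $G$ is an isometry on a dense subspace, and $\mathcal F(a_n)=SG$, being an invertible isometry, is unitary. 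Your proof replaces this combinatorial cancellation by the identity $L(s;a_n)L(s;b_n)\equiv 1$ on the critical line, which is the same use of $a\ast b=\delta$ after Plancherel; your conjugation $VT(c_n)V^{-1}=M_{m_c}$, $VSV^{-1}=\rho$ is precisely the paper's $w=v\circ u$ from Section 5 (Corollary~\ref{unitary_l} and Theorem~\ref{fourier2}). What your route buys: it needs only (\ref{init_condition}) for both $(a_n)$ and $(b_n)$ --- the $n^{\epsilon}$ hypothesis enters only through its trivial consequence $\sum|a_n|/\sqrt n<\infty$, since no pointwise summation formula is ever needed --- and it makes the normality of $T(a_n)$ manifest, which, as you correctly note, is the ingredient without which the naive telescoping $\mathcal F^*\mathcal F=T(a_n)^*T(b_n)T(b_n)^*T(a_n)=I$ would be unjustified. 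What the paper's computation buys: it is elementary, using no Fourier analysis on the line, and together with Corollary~\ref{pointwise} it yields the pointwise Poisson formula alongside unitarity, which is the added value of the stronger hypotheses. Both proofs are complete; yours in fact establishes a slightly stronger statement, anticipating Theorem~\ref{fourier2}(a).
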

\begin{proof} Consider
$G=ST(\overline{b_n})ST(a_n)$. Take a continuous function $f$ which is
compactly supported. Define $g(x)=ST(a_n)f(x)=\frac{1}{x}\sum a_n
f(\frac{n}{x})$, and note that $g$ vanishes for small values of $x$, and $|g(x)|=O\left(\sum|a_n|n^\epsilon x^{-1-\epsilon}\right)$.
 Then $T(\overline{b_n})g$ is given by the series (\ref{def1}), and
we obtain the absolutely convergent formula
\[Gf(x)=\sum_{m,n}\frac{a_n\overline{b_m}}{m}f(\frac{n}{m}x)\]
Take two such $f_1, f_2$ and compute
\[\langle Gf_1, Gf_2\rangle=\sum_{k,l,m,n}\frac{a_n\overline{b_m}\overline{a_k}b_l}{ml}\langle f_1(\frac{n}{m}x), f_2(\frac{k}{l}x)\rangle\]
the series are absolutely convergent when both $a_n$ and $b_n$
satisfy (\ref{init_condition}). Now we sum over all co-prime
$(p,q)$, such that $\frac{n}{m}=\frac{p}{q}\frac{k}{l}$. So,
$\frac{nl}{mk}=\frac{p}{q}$, i.e. $nl=up$ and $mk=uq$ for some
integer $u$. Then
\[\langle f_1(\frac{n}{m}x), f_2(\frac{k}{l}x)\rangle=\frac{l}{k}\langle f_1(\frac{p}{q}x), f_2(x)\rangle\]
\[\langle Gf_1,
Gf_2\rangle=\sum_{(p,q)=1}\langle f_1(\frac{p}{q}x),
f_2(x)\rangle\sum_{u}\sum_{mk=uq}\sum_{nl=up}\frac{a_n\overline{b_m}\overline{a_k}b_l}{mk}=\]
\[=\sum_{(p,q)=1}\frac{1}{q}\langle f_1(\frac{p}{q}x),
f_2(x)\rangle\sum_{u}\frac{1}{u}\sum_{mk=uq} \overline{a_kb_m}\sum_{nl=up}
a_nb_l\] and so the only non zero term corresponds to $p=q=1$, $u=1$,
$m=n=k=l=1$, i.e. \[\langle Gf_1,
Gf_2\rangle=\langle f_1, f_2\rangle\] Since $\mathcal F(a_n)$ is
invertible, we conclude that $\mathcal F(a_n)=SG$ is unitary. \end{proof}
\section{An example of a unitary operator defined by series}

Let $a_n\in\mathbb C$ be a sequence satisfying
(\ref{init_condition}). We denote by $C_0(0, \infty)$ the space of
compactly supported continuous functions.
\\Let $T(a_n):C_0(0, \infty)\rightarrow C_0(0, \infty)$ be given by
$(\ref{def1})$.
We will describe conditions on $a_n$ that would imply $\langle
Tf,Tg\rangle_{L^2}=\langle f,g\rangle_{L^2}$ for all $f,g\in C_0(0,
\infty)$. Then we can conclude that $T$ is an isometric operator on
a dense subspace of $L^2[0,\infty)$, and thus can be extended as an
isometry of all $L^2[0,\infty)$.\\A $C$-isometric (correspondingly, unitary) operator will mean an
isometric (unitary) operator, scaled by a constant factor $C$. 
\begin{prop}\label{unitary_sum} $\langle
Tf,Tg\rangle_{L^2}=C^2\langle f,g\rangle_{L^2}$ for all $f,g\in
C_0(0, \infty)$ if and only if for all co-prime pairs $(m_0, n_0)$
\begin{equation}\label{sequence_condition}\sum_{k=1}^{\infty}\frac{a_{m_0
k}\overline{a_{n_0k}}}{k}=\left\{\begin{array}{ll}C^2,& m_0=n_0=1\\
0,& m_0\neq n_0\end{array}\right.\end{equation}\end{prop}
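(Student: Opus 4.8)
The plan is to expand $\langle Tf,Tg\rangle$ into an absolutely convergent double series and regroup the terms by reduced ratio, so that the left-hand side of (\ref{sequence_condition}) emerges as the coefficient attached to each coprime pair. Writing $Tf(x)=\sum_m a_m f(mx)$ and $Tg(x)=\sum_n a_n g(nx)$ and expanding, I would first record the scaling identity
\[\langle f(mx), g(nx)\rangle = \frac{1}{d}\langle f(m_0 x), g(n_0 x)\rangle, \quad d=\gcd(m,n),\ m=dm_0,\ n=dn_0,\]
obtained from the substitution $y=dx$. Since $f,g\in C_0(0,\infty)$, the Cauchy--Schwarz bound $|\langle f(mx),g(nx)\rangle|\le \frac{1}{\sqrt{mn}}\|f\|\|g\|$ used in Lemma \ref{bounded_lemma}, together with (\ref{init_condition}), gives $\sum_{m,n}|a_m||a_n||\langle f(mx),g(nx)\rangle|\le\left(\sum_n|a_n|/\sqrt n\right)^2\|f\|\|g\|<\infty$, so the double series for $\langle Tf,Tg\rangle$ converges absolutely and may be rearranged. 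Collecting the indices $m=km_0$, $n=kn_0$ (for which $\gcd(m,n)=k$) under each coprime $(m_0,n_0)$ yields
\[\langle Tf,Tg\rangle=\sum_{(m_0,n_0)=1}c(m_0,n_0)\,\langle f(m_0 x), g(n_0 x)\rangle,\quad c(m_0,n_0)=\sum_{k=1}^{\infty}\frac{a_{km_0}\overline{a_{kn_0}}}{k},\]
where each $c(m_0,n_0)$ is exactly the left-hand side of (\ref{sequence_condition}), absolutely convergent as a byproduct of the double sum. The ``if'' direction is then immediate: under (\ref{sequence_condition}) only the pair $(1,1)$ survives, leaving $\langle Tf,Tg\rangle=C^2\langle f,g\rangle$.

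For the converse I would isolate one coefficient at a time by a localization argument. Fix a coprime pair $(p,q)$, a point $x_0>0$, and a fixed nonnegative bump $\phi$ supported in $[-1,1]$, and set $f_\delta(x)=\phi((x-px_0)/\delta)$ and $g_\delta(x)=\phi((x-qx_0)/\delta)$ (taking $f_\delta=g_\delta$ when $(p,q)=(1,1)$). For small $\delta$ these lie in $C_0(0,\infty)$, and the target term is computed by the substitution $u=(x-x_0)/\delta$ to be $\langle f_\delta(px), g_\delta(qx)\rangle=\delta\,\kappa(p,q)$ with $\kappa(p,q)=\int\phi(pu)\overline{\phi(qu)}\,du>0$. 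On the right-hand side, $\langle f_\delta,g_\delta\rangle$ equals $\delta\|\phi\|^2$ when $p=q=1$ and vanishes for small $\delta$ otherwise (disjoint supports, since a nontrivial coprime pair has $p\neq q$). The crux is to show that the remaining coprime pairs contribute only $o(\delta)$: a pair $(m_0,n_0)\neq(p,q)$ makes $\langle f_\delta(m_0 x),g_\delta(n_0 x)\rangle$ nonzero only when the intervals $\{x:m_0x\approx px_0\}$ and $\{x:n_0x\approx qx_0\}$ meet, which (using $|pn_0-qm_0|\ge1$) forces $m_0+n_0\ge x_0/\delta$.

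The main obstacle is precisely this tail estimate, and it closes because of the same bound that gave absolute convergence. Each competing term satisfies $|\langle f_\delta(m_0x),g_\delta(n_0x)\rangle|\le\frac{1}{\sqrt{m_0 n_0}}\|f_\delta\|\|g_\delta\|=\frac{\delta\|\phi\|^2}{\sqrt{m_0 n_0}}$, so the competing contribution is at most $\delta\|\phi\|^2$ times a tail of $\sum_{(m_0,n_0)=1}\frac{|c(m_0,n_0)|}{\sqrt{m_0 n_0}}$; writing $m=km_0,\ n=kn_0$ this sum telescopes into $\left(\sum_n|a_n|/\sqrt n\right)^2<\infty$, so the tail over pairs with $m_0+n_0\ge x_0/\delta$ tends to $0$ as $\delta\to0$. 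Hence $\langle Tf_\delta,Tg_\delta\rangle=c(p,q)\,\delta\,\kappa(p,q)+o(\delta)$. Dividing by $\delta$ and letting $\delta\to0$, the hypothesis $\langle Tf_\delta,Tg_\delta\rangle=C^2\langle f_\delta,g_\delta\rangle$ gives $c(1,1)\|\phi\|^2=C^2\|\phi\|^2$ when $(p,q)=(1,1)$ and $c(p,q)\kappa(p,q)=0$ otherwise; since $\kappa(p,q)\neq0$ this yields exactly (\ref{sequence_condition}).
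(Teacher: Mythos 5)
Your proof is correct, and its first half is exactly the paper's argument: expand $\langle Tf,Tg\rangle$ into a double series, control it by the Cauchy--Schwarz bound $|\langle f(mx),g(nx)\rangle|\le \|f\|\|g\|/\sqrt{mn}$ together with (\ref{init_condition}), and regroup indices as $(m,n)=(km_0,kn_0)$ over coprime pairs, producing the coefficients $c(m_0,n_0)=\sum_k a_{km_0}\overline{a_{kn_0}}/k$. The differences lie in where the care is spent. The paper is more scrupulous about interchanging integral and sum: it integrates over $[\epsilon,\infty)$, where only finitely many indices $m,n<M/\epsilon$ contribute, and then proves that the leftover term $S(\epsilon)$ tends to $0$; you pass from absolute convergence of the series of inner products directly to the identity for $\langle Tf,Tg\rangle$, which strictly speaking needs a Fubini--Tonelli step --- but the bound you record is precisely the Tonelli hypothesis, so this is a minor elision. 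In the converse direction you are more complete than the paper: having rewritten $\langle Tf,Tg\rangle$ as a pairing of $f$ against $\sum_{(m_0,n_0)=1}c(m_0,n_0)\,m_0^{-1}g(n_0x/m_0)$, the paper simply asserts the ``if and only if,'' leaving the extraction of individual coefficients to the reader. Your shrinking-bump argument --- supports centered at $px_0$ and $qx_0$, the observation that a competing coprime pair must satisfy $x_0|pn_0-qm_0|\le\delta(m_0+n_0)$ with $|pn_0-qm_0|\ge1$, hence $m_0+n_0\ge x_0/\delta$, and the tail of the convergent majorant $\left(\sum|a_n|/\sqrt n\right)^2$ contributing only $o(\delta)$ --- is a correct and genuinely useful completion of the ``only if,'' which is the one place where the proposition requires an idea the paper does not spell out.
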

\begin{proof} Take $\epsilon>0$.  Denote $M=\sup\{x|f(x)\neq 0
\vee g(x)\neq0\}$. Write
\[\int_\epsilon^\infty Tf(x)\overline{Tg(x)}dx=\int_\epsilon^\infty\sum_{m,n=1}^\infty a_m\overline{a_n}f(mx)\overline{g(nx)}dx\]
It is only necessary to consider $m, n<M/\epsilon$. Thus the sum is
finite, and we may write
\[\int_\epsilon^\infty Tf(x)\overline{Tg(x)}dx=\sum_{m,n=1}^\infty
a_m\overline{a_n}\int_\epsilon^\infty f(mx)\overline{g(nx)}dx\] Note
that
\[\int_0^\infty f(mx)\overline{g(nx)}dx\leq\|f(mx)\|\|g(nx)\|=\frac{1}{\sqrt{mn}}\|f\|\|g\|\]
and therefore \[\sum_{m,n=1}^\infty a_m\overline{a_n}\int_0^\infty
f(mx)\overline{g(nx)}dx\] is absolutely convergent:
\[\sum_{m,n=1}^\infty\left|a_m\overline{a_n}\int_{0}^\infty
f(mx)\overline{g(nx)}dx\right|\leq\sum_{m,n=1}^\infty\frac{|a_m||a_n|}{\sqrt{mn}}\|f\|\|g\|\]
Therefore, the sum
\[S(\epsilon)=\sum_{m,n=1}^\infty a_m\overline{a_n}\int_{0}^{\epsilon}
f(mx)\overline{g(nx)}dx\] is absolutely convergent. We will show
that $S(\epsilon)\rightarrow0$ as $\epsilon\rightarrow\infty$.
Assume $|f|\leq A_f$, $|g|\leq A_g$. Then
\[|S(\epsilon)|\leq \sum_{m,n}a_m\overline{a_n}\int_0^\epsilon
f(mx)\overline{g(nx)}dx\leq\sum_{m,n}\frac{|a_ma_n|}{\sqrt
{mn}}\sqrt{\int_0^{m\epsilon} |f|^2}\sqrt{\int_0^{n\epsilon}
|g|^2}\] And
\[\sum_{m=1}^\infty\frac{|a_m|}{\sqrt m}\sqrt{\int_0^{m\epsilon}
|f|^2}=\sum_{m=1}^{\sqrt{1/\epsilon}}+\sum_{m=\sqrt{1/\epsilon}}^\infty\leq
\sum_{m=1}^\infty \frac{|a_m|}{\sqrt m}\sqrt{\int_0^{\sqrt\epsilon}
|f|^2} + \|f\|\sum_{m=\sqrt{1/\epsilon}}^\infty \frac{|a_m|}{\sqrt
m}\longrightarrow 0\]
 We conclude that
\[\langle Tf, Tg\rangle=\sum_{m,n=1}^\infty
\frac{a_m\overline{a_n}}{m}\int_{0}^\infty
f(x)\overline{g(\frac{n}{m}x)}dx=\left\langle f(x),
\sum_{(m_0,n_0)=1}\frac{1}{m_0}\sum_{k=1}^{\infty}\frac{a_{m_0
k}\overline{a_{n_0k}}}{k} g(\frac{n_0}{m_0}x)\right\rangle\]
Therefore, $T(a_n)$ is a $C$-isometry on $C_0(0,\infty)$ if and only
if $(a_n)$ satisfy (\ref{sequence_condition}). \end{proof}
\noindent\textbf{Example 1.} Take $a_n^{(2)}=0$ for $n\neq 2^k$ and
\[a_{2^k}^{(2)}=\left\{\begin{array}{ll}1,& k=0\\
(-1)^{k+1},& k\geq1\end{array}\right.\] Then
\[T_2f(x)=\sum_n a_n^{(2)}f(nx)=f(x)+f(2x)-f(4x)+f(8x)-f(16x)+...\]
is a $\sqrt 2$-isometry.
\\\\\textbf{Example 2.} Generelazing example 1 (and using the already defined $a_n^{(2)}$), we fix a natural number $m$, and take
$a_{m^k}^{(m)}=\left(\frac{m}{2}\right)^{k/2}a_{2^k}^{(2)}$ and
$a_n^{(m)}=0$ for $n\neq m^k$. Then \[T_m f(x)=\sum
a_n^{(m)}f(nx)\] is again a $\sqrt 2$-isometry.
\\\\\textbf{Example 3.} Similiarly, we could take $a_n=0$ for $n\neq 2^k$ and
\[a_{2^k}=\left\{\begin{array}{ll}1,& k=0\\
-1,& k\geq1\end{array}\right.\] Then
\[Tf(x)=f(x)-f(2x)-f(4x)-f(8x)-f(16x)-...\]
is a $\sqrt 2$-isometry.
\\\\\textit{Remarks}
\\$\bullet$ If $a_n$ and $b_n$ satisfy
(\ref{init_condition}),
then so does their convolution $c_n=(a\ast b)_n=\sum_{kl=n}a_k b_l$:
\[\sum_n\frac{|c_n|}{\sqrt n}\leq \sum_n\sum_{kl=n}\frac{|a_k||b_l|}{\sqrt{kl}}
=\sum_k\frac{|a_k|}{\sqrt k}\sum_l\frac{|b_l|}{\sqrt l}<\infty\]
\\$\bullet$ Also, any two scaled isometries of the form $T(a_n)$
commute: If $a_n$ and $b_n$ satisfy (\ref{sequence_condition}),
$T(a_n)$ and $T(b_n)$ are isometries from $C_0(0,\infty)$ to itself,
and thus so is their composition which is easily computed to be
$T(a_n\ast b_n)$.
\begin{prop}When $(a_n)$ satisfies
(\ref{sequence_condition}), $T(a_n)$ is $C$-unitary.\end{prop}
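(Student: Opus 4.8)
The plan is to show that, beyond the $C$-isometry already established in Proposition~\ref{unitary_sum}, the operator $T=T(a_n)$ is surjective on $L^2[0,\infty)$, since a $C$-isometry that is surjective is precisely a $C$-unitary. Condition (\ref{sequence_condition}) gives $T^*T=C^2 I$, so it suffices to prove the complementary relation $TT^*=C^2I$, i.e. that the adjoint $T^*$ is itself a $C$-isometry.

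First I would identify the adjoint explicitly. For $f,g\in C_0(0,\infty)$ the sum defining $T$ is finite on each compact set, so I may integrate term by term and substitute $u=nx$ in each summand, obtaining
\[\langle Tf,g\rangle=\sum_n \frac{a_n}{n}\int_0^\infty f(u)\overline{g(u/n)}\,du=\Big\langle f,\ \sum_n \frac{\overline{a_n}}{n}\,g(\cdot/n)\Big\rangle.\]
Hence $T^*g(x)=\sum_n \frac{\overline{a_n}}{n}g(x/n)$, and recalling $Sf(x)=x^{-1}f(1/x)$ one checks directly that this is exactly
\[T^*=S\,T(\overline{a_n})\,S.\]
Since $S$ preserves $C_0(0,\infty)$ and both sides are bounded, this identity, first verified on the dense subspace $C_0(0,\infty)$, extends to all of $L^2[0,\infty)$.

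The second step is the observation that closes the argument: the conjugated sequence $\overline{a_n}$ again satisfies (\ref{sequence_condition}), because conjugating the defining series sends $C^2\mapsto C^2$ and $0\mapsto 0$. By Proposition~\ref{unitary_sum}, $T(\overline{a_n})$ is therefore also a $C$-isometry, and since $S$ is a unitary involution, $T^*=S\,T(\overline{a_n})\,S$ is a $C$-isometry as well. Equivalently $TT^*=C^2I$, which together with $T^*T=C^2I$ is exactly the statement that $T$ is $C$-unitary.

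The only point requiring care is the adjoint computation itself---justifying the term-by-term integration and the change of variables $u=nx$---but this is routine on $C_0(0,\infty)$, where the series is locally finite and all operators involved are bounded. Once the identity $T^*=S\,T(\overline{a_n})\,S$ is in hand, the invariance of (\ref{sequence_condition}) under conjugation supplies the rest for free, so I do not anticipate a genuine obstacle.
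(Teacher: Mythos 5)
Your proof is correct, and its skeleton matches the paper's: identify the adjoint explicitly as $T^*g(x)=\sum_n \frac{\overline{a_n}}{n}\,g(x/n)$, show that $T^*$ is itself a $C$-isometry, and conclude $T^*T=TT^*=C^2I$. Where you genuinely diverge is in how the isometry of $T^*$ is established. The paper simply reruns the computation of Proposition \ref{unitary_sum} for the operator $g\mapsto\sum_n \frac{\overline{a_n}}{n}g(x/n)$ on functions supported in $[a,b]$ with $a>0$ (``proof identical to that of $T(a_n)$''), whereas you factor $T^*=ST(\overline{a_n})S$ and reduce to Proposition \ref{unitary_sum} applied to the conjugate sequence $(\overline{a_n})$ --- which indeed satisfies (\ref{sequence_condition}), since conjugating the defining sums fixes $0$ and fixes $C^2$ (it is real, being $\sum_k|a_k|^2/k$ for $m_0=n_0=1$); note also that $(\overline{a_n})$ inherits (\ref{init_condition}), so $T(\overline{a_n})$ is bounded and the isometry extends from $C_0(0,\infty)$ to $L^2$. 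Your reduction buys a shorter argument: nothing is recomputed, only the unitarity of the involution $S$ and the conjugation-invariance of (\ref{sequence_condition}) are invoked. It is worth pointing out that the identity $ST(\overline{a_n})S=T(a_n)^*$ on which you rely is precisely what the paper records in the remark immediately \emph{following} this proposition; your argument shows that remark could have been placed before the proposition and used to streamline its proof.
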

\begin{proof} It is easy to verify that for any $g\in C[0,\infty)$ and $a_n$
satisfying (\ref{sequence_condition}),
\[T(a_n)^*g=\sum\frac{\overline{a_n}}{n}g(\frac{x}{n})\]
Moreover, $T(a_n)^*$ is a scaled isometry on $\{f\in C_0[0,\infty):
supp(f)\subset[a,b], a>0\}$ (proof identical to that of $T(a_n)$),
and so a scaled isometry on $L^2$. Thus $T^*T=TT^*=\|T\|^2I$, and so
$T(a_n)$ is $C$-unitary.\end{proof}
\noindent\textit{Remark.} We recall the operator $Sf(x)=\frac{1}{x}f\left(\frac{1}{x}\right)$ - a unitary operartor of
$L^2(0,\infty)$. Then for a continuous function $f$ with compact
support which is bounded away from 0, we have $Sf\in C_0(0,\infty)$
and so we can use (\ref{def1}) to obtain $ST(\overline{a_n})Sf=T(a_n)^*f$ and
therefore $ST(\overline{a_n})S=T(a_n)^*$ on all $L^2$. In particular, for real sequences $(a_n)$, $ST^m$
and $T^mS$ are unitary involutions (up to scaling) for any integer
$m$.

\section{Diagonalizing  the Fourier-Poisson operator}
We further generalize the Poisson summation formula: by removing some of the conditions on the sequence $(a_n)$, we are still able to construct a unitary operator satisfying the summation formula, but only in the weaker operator sense.  This is done through a natural isometry between $L^2[0,\infty)$ and $L^2(-\infty, \infty)$ which was suggested to us by Bo'az Klartag (see also \cite{Korani}).
\\\\We will denote by $dm$ the Lebesgue measure on $\mathbb R$, and $\hat g$ will stand for the
Fourier transform defined as $\hat{g}(\omega)=\int_{-\infty}^\infty
g(y)e^{-iy\omega}dy$. \\First, define two isometries of spaces:
\\(1) $u:L^2\left([0,\infty), dm(x)\right)\rightarrow
L^2\left(\mathbb R, e^ydm(y)\right)$ given by $f(x)\mapsto
g(y)=f(e^y)$.
\\(2) $v:L^2\left(\mathbb R,
e^ydm(y)\right)\rightarrow L^2\left(\mathbb R, dm\right)$ given by
$g(y)\mapsto h(x)=(2\pi)^{-\frac{1}{2}}\hat{g}(x+i/2)$.
\\$u$ is isometric by a simple change of variables. \\To see that $v$
is isometric, note that $\widehat{f}(x+i/2)=\widehat{e^{t/2}f(t)}(x)$,
and so by Plancherel's formula
\[\int|\hat{f}(x+i/2)|^2dx=2\pi\int|f(t)|^2e^t
dt\] (alternatively, one could decompose $v$ into the composition of
two isometries: $f(y)\mapsto e^{y/2}f(y)$, identifying
$L^2\left(\mathbb R, e^ydm(y)\right)$ with $L^2(\mathbb R, dm)$, and
then Fourier transform).
\\ We will denote the composition $v\circ
u=w$.
\\For $A:L^2[0,\infty)\rightarrow L^2[0,\infty)$, we write $\widetilde{A}=wAw^{-1}:L^2(\mathbb R)\rightarrow L^2(\mathbb R)$ -
the conjugate operator to $A$. The conjugate to $S$ is $\tilde
S(h)(x)=h(-x)$.
\\\\Let $a_n$ satisfy (\ref{init_condition}), implying $|L(1/2+ix; a_n)|$ is bounded and continuous. Then for $g=u(f)$,
\[(uT(a_n)u^{-1}g)(y)=\sum a_n g(y+\log n)=g\ast\nu(y)\]
where $\nu(y)=\sum a_n\delta_{-\log n}(y)$ and $\hat{\nu}(z)=\sum
a_n e^{i z\log n}=\sum a_n n^{iz}=L(-iz; a_n)$, which converges for
$Im z\geq1/2$ by (\ref{init_condition}). And so letting $h=vg$,
\[\widetilde{T(a_n)}h(x)=(2\pi)^{-\frac{1}{2}}\widehat{g\ast \nu}(x+i/2)=
L(1/2-ix; a_n)h(x)\]
thus we proved
\begin{cor}\label{unitary_l} Assume $a_n$ satisfies
(\ref{init_condition}). Then the following are equivalent:
\\(a)
$|L(1/2+ix; a_n)|=C$ \\(b) $T(a_n)$ is $C$-unitary on
$L^2[0,\infty)$\\(c) $(a_n)$ satisfies
(\ref{sequence_condition}).\end{cor}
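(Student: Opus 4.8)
The plan is to read the corollary as a synthesis of two ingredients already established: the diagonalization $\widetilde{T(a_n)}h(x)=L(1/2-ix;a_n)h(x)$ computed just above the corollary, which will yield the equivalence (a)$\iff$(b), together with Proposition \ref{unitary_sum} and the proposition showing that (\ref{sequence_condition}) forces $C$-unitarity, which together give (b)$\iff$(c). The common thread for the first equivalence is that $w=v\circ u$ is a composition of isometric isomorphisms of Hilbert spaces; conjugation by $w$ therefore preserves operator norms and intertwines adjoints, so $T(a_n)$ is $C$-unitary on $L^2[0,\infty)$ if and only if its conjugate $\widetilde{T(a_n)}$ is $C$-unitary on $L^2(\mathbb R,dm)$.

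For (a)$\iff$(b), I would invoke the displayed computation preceding the corollary: $\widetilde{T(a_n)}$ is the multiplication operator $M_m$ with symbol $m(x)=L(1/2-ix;a_n)$, bounded because (\ref{init_condition}) makes $|L(1/2+ix;a_n)|$ bounded. For any bounded multiplication operator one has $M_m^*=M_{\overline m}$ and hence $M_m^*M_m=M_mM_m^*=M_{|m|^2}$, so $M_m$ is $C$-unitary precisely when $|m(x)|^2=C^2$ for almost every $x$. Since $m$ is continuous, this almost-everywhere identity upgrades to $|m(x)|=C$ for every $x$. Finally $m(x)=L(1/2-ix;a_n)=L(1/2+i(-x);a_n)$, so as $x$ ranges over $\mathbb R$ the condition $|m(x)|\equiv C$ is literally condition (a); this closes (a)$\iff$(b).

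For (b)$\iff$(c), the implication (c)$\Rightarrow$(b) is exactly the earlier proposition asserting that (\ref{sequence_condition}) makes $T(a_n)$ $C$-unitary. Conversely, if $T(a_n)$ is $C$-unitary then $T^*T=C^2I$, whence $\langle Tf,Tg\rangle=\langle T^*Tf,g\rangle=C^2\langle f,g\rangle$ for all $f,g$, and in particular for all $f,g\in C_0(0,\infty)$; Proposition \ref{unitary_sum} then delivers (\ref{sequence_condition}). Combining the two biconditionals yields the full cycle (a)$\iff$(b)$\iff$(c). The only genuinely delicate points, rather than obstacles, are the two transfers in the middle paragraph: one must confirm that the isomorphism $w$ really carries $T(a_n)^*$ to $M_m^*$ (so that $C$-unitarity, not merely $C$-isometry, is preserved), and that continuity of $L(1/2+ix;a_n)$ legitimately promotes the almost-everywhere identity $|m|=C$ to the everywhere identity demanded by the pointwise statement (a). Everything else is a direct reading-off of the diagonal form and an appeal to results already proved.
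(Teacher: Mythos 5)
Your proposal is correct and follows essentially the same route as the paper: the paper's "proof" is precisely the diagonalization computation preceding the corollary (showing $\widetilde{T(a_n)}$ is multiplication by $L(1/2-ix;a_n)$, with the corollary following "thus we proved"), combined with Proposition \ref{unitary_sum} and the subsequent proposition for the equivalence with (\ref{sequence_condition}). You merely make explicit the steps the paper leaves tacit — that conjugation by the unitary $w$ preserves $C$-unitarity, that a bounded multiplication operator is $C$-unitary exactly when its symbol has constant modulus $C$ a.e., and that continuity upgrades this to the pointwise statement (a).
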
 \noindent The equivalence of
(a) and (c) can easily be established directly.
\\\\For example, the $\sqrt 2$-unitary $Tf(x)=f(x)+f(2x)-f(4x)+f(8x)-...$ discussed previously is associated with $L(s; a_n)=\frac{2+2^s}{1+2^s}$ which has absolute value of $\sqrt 2$ on $Re (s)=1/2$.
\\\\This suggests that the Fourier-Poisson transform associated with $a_n$,
which was defined in section 3 for some special sequences $(a_n)$, could be generalized as follows:
$\mathcal F(a_n)f=T(\overline{a_n})^{-1}ST(a_n)f$ should be defined through
\begin{equation}\label{Fourier_Poisson}\widetilde{\mathcal F(a_n)}h(x)=h(-x)\frac{L(1/2+ix; a_n)}{L(1/2-ix; \overline{a_n})}=h(-x)\frac{L(1/2+ix; a_n)}{\overline{L(1/2+ix; a_n)}}\end{equation}
we arrive at the following
\begin{thm}\label{fourier2} Assume $\sum |a_n| n^{-1/2}<\infty$. Then
\\(a) There exists a bounded operator $\mathcal F(a_n):L^2[0,\infty)\rightarrow L^2[0,\infty)$
satisfying the Poisson summation formula (in its operator form) $T(\overline{a_n})\mathcal F(a_n)=ST(a_n)$. Moreover, $\mathcal F(a_n)$ is unitary.
\\(b) If for some $\epsilon>0$, $\sum |a_n|n^{-1/2+\epsilon}<\infty$, then a bounded $\mathcal F(a_n)$ satisfying $T(\overline{a_n})\mathcal F(a_n)=ST(a_n)$ is unique.\end{thm}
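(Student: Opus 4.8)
The plan is to push everything to the Fourier side by conjugating with the isometry $w$, using that $S$ and $T(a_n)$ are already diagonalized: $\tilde S h(x)=h(-x)$ and $\widetilde{T(c_n)}h(x)=L(1/2-ix;c_n)h(x)$. Write $R$ for the reflection $h(x)\mapsto h(-x)$, $M_\phi$ for multiplication by $\phi$, and abbreviate $L(s)=L(s;a_n)$. Since $\widetilde{T(\overline{a_n})}=M_{\overline{L(1/2+ix)}}$ and $\tilde S\widetilde{T(a_n)}=M_{L(1/2+ix)}R$, conjugating the operator Poisson formula $T(\overline{a_n})\mathcal F(a_n)=ST(a_n)$ turns it into the single scalar multiplier equation $M_{\overline{L(1/2+ix)}}\,\widetilde{\mathcal F(a_n)}=M_{L(1/2+ix)}R$ on $L^2(\mathbb R)$. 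The whole theorem then reduces to analysing the behaviour of $L$ on the critical line.

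For (a) I would set $m(x)=L(1/2+ix)/\overline{L(1/2+ix)}$ wherever $L(1/2+ix)\neq0$ and $m(x)=1$ on the zero set $Z=\{x:L(1/2+ix)=0\}$. Because $\sum|a_n|n^{-1/2}<\infty$ makes $L(1/2+ix)$ bounded and continuous, $m$ is bounded and measurable with $|m(x)|=1$ everywhere, so $M_mR$ is a product of two unitaries and hence unitary; I then define $\mathcal F(a_n)=w^{-1}M_mR\,w$, a unitary operator on $L^2[0,\infty)$. It remains to confirm the Poisson formula, i.e. that $M_{\overline{L(1/2+ix)}}M_mR=M_{L(1/2+ix)}R$, which amounts to the pointwise identity $\overline{L(1/2+ix)}\,m(x)=L(1/2+ix)$: off $Z$ it is immediate, and on $Z$ both sides vanish since $\overline{L(1/2+ix)}=0$ there. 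The key point is that this identity, and so the formula, holds whatever value $m$ takes on $Z$; only the unitarity is sensitive to the choice, forcing $|m|=1$ on $Z$.

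For (b), suppose $\mathcal F_1,\mathcal F_2$ both satisfy the formula and put $D=\mathcal F_1-\mathcal F_2$. Then $T(\overline{a_n})D=0$, which conjugates to $M_{\overline{L(1/2+ix)}}\widetilde D=0$, so $\overline{L(1/2+ix)}(\widetilde D h)(x)=0$ a.e. for every $h$; thus $\widetilde D h$ is supported in $Z$ for all $h$, and uniqueness holds as soon as $Z$ is a null set. This is where the stronger hypothesis is used: if $\sum|a_n|n^{-1/2+\epsilon}<\infty$, then $\sum a_n n^{-s}$ converges absolutely and uniformly on $\mathrm{Re}\,s\geq1/2-\epsilon$, so $L(s)$ is holomorphic on a neighbourhood of the critical line. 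Hence $x\mapsto L(1/2+ix)$ is real-analytic and, not being identically zero (else $L\equiv0$ by the identity theorem, whence every $a_n=0$), has only isolated zeros. Therefore $Z$ is discrete, $\widetilde D=0$, and $\mathcal F_1=\mathcal F_2$.

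The main obstacle is controlling the size of $Z$, and the argument shows exactly why the extra $\epsilon$ is indispensable. Under the bare assumption $\sum|a_n|n^{-1/2}<\infty$ one knows only that $L(1/2+ix)$ is continuous (indeed Bohr almost-periodic), and such a function can vanish on a set of positive measure; on that set $m$ may be redefined at will without disturbing the Poisson formula, yielding distinct bounded solutions and breaking uniqueness. Moving the abscissa of absolute convergence strictly left of $1/2$ upgrades continuity to real-analyticity, collapsing $Z$ to a discrete set. Beyond these structural points, the only remaining work is the routine check that conjugation by $w$ indeed converts the stated operator identities into the scalar ones above.
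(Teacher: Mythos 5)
Your proposal is correct and takes essentially the same route as the paper: you pass to the multiplier picture via $w$, define the unitary multiplier to equal $1$ on the zero set $Z$ (exactly the paper's convention $\arg L(1/2+ix;a_n)=0$ there, so that both sides of the multiplier identity vanish on $Z$), and for (b) use analyticity of $L$ in a strip around the critical line to make $Z$ discrete, hence Lebesgue-null. The paper phrases uniqueness through density of continuous $h$ supported off $Z$ rather than your difference-operator argument, but this is a cosmetic difference, not a different method.
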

\begin{proof} (a) We have $L(1/2+ix; a_n)/\overline{L(1/2+ix; a_n)}=e^{2i(\arg L(1/2+ix; a_n))}$ whenever $L(1/2+ix;a_n)\neq 0$.
In accordance with (\ref{Fourier_Poisson}), define \[\widetilde{\mathcal F(a_n)}h(x)=e^{2i(\arg L(1/2+ix; a_n))}h(-x)\]
taking $\arg L(1/2+ix; a_n)=0$  whenever $L(1/2+ix; a_n)=0$.
We then have
\[L(1/2-ix; \overline{a_n})\widetilde{F(a_n)}h(x)=L(1/2+ix; a_n)h(-x)\]
 for all
$h\in L^2(\mathbb R)$, implying
$T(\overline{a_n})\mathcal F(a_n)=ST(a_n)$ in $L^2[0, \infty)$. Also, $\mathcal F(a_n)$ is isometric and invertible, thus unitary.
\\(b) For uniqueness, observe that $L(s;a_n)$ is analytic in a neighborhood of $Re(s)=1/2$, and so its set of zeros $Z$ is discrete,
and the ratio $L(1/2+ix; a_n)/\overline{L(1/2+ix; a_n)}$ is continuous and of absolute value 1 outside of $Z$. Thus for continuous $h$ with $supp(h)\cap Z=\emptyset$, the equation
\[L(1/2-ix; \overline{a_n})\widetilde{F(a_n)}h(x)=L(1/2+ix)h(-x)\] determines $\widetilde{F(a_n)h}$ uniquely, and all such $h$ are dense in $L^2(\mathbb R)$.
\end{proof}
\noindent By part (b) we conclude that under the conditions of Theorem \ref{fourier1}, the operator $\mathcal F(a_n)$ defined in section 3 coincides with the operator defined here.
\noindent
\\\\\textit{Remark.} It was pointed out to us by Fedor Nazarov that under the conditions of Theorem \ref{fourier2} the Poisson summation formula cannot hold pointwise for all sequences $(a_n)$.

\section{A formula involving differentiation}
\noindent Denote by $B:L^2[0,\infty)\rightarrow L^2[0,\infty)$ the unbounded operator
\[Bf(x)=i(xf'+f/2)\] with $Dom(B)=\{f\in C^\infty: xf'+f/2\in L^2\}$. It is straightforward to check that $B$ is a symmetric operator.
\\\\
It is easy to verify that the ordinary Fourier transform $\mathcal F$ satisfies, for a well behaved (i.e. Schwartz) function $f$, the identity $B\mathcal F f+\mathcal FB f=0$. It turns out to be also a consequence of Poisson's formula, and so holds for a large family of operators. We will need the following standard lemma (see \cite{Simon})
\begin{lem}\label{schwartz} Take a function $g\in L^2(\mathbb R)$. The following are equivalent:
\\(a) $g\in C^\infty(\mathbb R)$ and \[\sup _{|y|<b}\sup_t e^{yt}|g^{(k)}(t)|<\infty\]
for all $b<B$ and $k\geq 0$.
\\ (b) $h=\hat g$ is a Schwartz function, which has an analytic extension to the strip $|y|<B$ such that \[\sup_{|y|<b}\sup_x |x|^k |h(x+iy)|<\infty\] for all $b<B$ and $k\geq 0$.
\end{lem}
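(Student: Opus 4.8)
The plan is to exploit the single identity, already used above to establish that $v$ is isometric, that vertical translation of the argument of $\hat g$ corresponds to exponential reweighting of $g$: writing $g_y(t) := e^{yt} g(t)$, one has $\hat g(x+iy) = \widehat{g_y}(x)$, since $e^{-it(x+iy)} = e^{ty} e^{-itx}$. Thus the only candidate for the analytic extension of $h=\hat g$ into the strip is $H(z) = \int_{-\infty}^\infty g(t) e^{-itz}\, dt$, and the two conditions become two faces of the standard smoothness--decay duality of the Fourier transform, read off along each horizontal line $\mathrm{Im}\,z = y$.

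For the direction (a) $\Rightarrow$ (b), I would first observe that (a) with $k=0$ forces $|g(t)| \le C e^{-b|t|}$ for every $b<B$, so that $g_y \in L^1$ for all $|y|<B$ and $H(z)$ is well defined; analyticity of $H$ on the strip then follows by differentiating under the integral sign (or by Morera, using the locally uniform convergence granted by the exponential decay), and $H$ agrees with $\hat g$ on the real axis. To obtain the decay estimate I would use Leibniz to write $g_y^{(k)} = \sum_{j=0}^k \binom{k}{j} y^{k-j} e^{yt} g^{(j)}(t)$, each summand lying in $L^1$ with $L^1$-norm bounded uniformly for $|y|<b$ by (a); the transformation rule $\widehat{g_y^{(k)}}(x) = (ix)^k \widehat{g_y}(x)$ then yields $|x|^k |h(x+iy)| = |\widehat{g_y^{(k)}}(x)| \le \|g_y^{(k)}\|_{L^1}$, bounded uniformly on $|y|<b$, which is exactly the asserted bound. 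That $h$ is moreover Schwartz follows from the $y=0$ case of this bound together with Cauchy's integral formula over a small circle inside the strip, which converts the uniform rapid decay of $h$ across the strip into rapid decay of every derivative $h^{(k)}$.

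For (b) $\Rightarrow$ (a), I would start from Fourier inversion $g(t) = \frac{1}{2\pi}\int_{\mathbb R} h(x) e^{itx}\, dx$ and shift the contour to the horizontal line $\mathrm{Im}\,z = y_0$ for an arbitrary $|y_0|<B$, obtaining $g(t) = \frac{e^{-t y_0}}{2\pi} \int_{\mathbb R} h(x+iy_0) e^{itx}\, dx$; differentiating $k$ times in $t$ under the integral gives $g^{(k)}(t) = \frac{e^{-ty_0}}{2\pi}\int_{\mathbb R} \bigl(i(x+iy_0)\bigr)^k h(x+iy_0)e^{itx}\, dx$. Since the strip estimate in (b) makes $|x+iy_0|^k |h(x+iy_0)|$ integrable in $x$ with a bound depending only on $y_0$, this yields $|g^{(k)}(t)| \le C_k(y_0)\, e^{-t y_0}$; letting $y_0 \to b^-$ controls $t\to+\infty$ and $y_0 \to -b^+$ controls $t \to -\infty$, so that $\sup_{|y|<b}\sup_t e^{yt}|g^{(k)}(t)|<\infty$ for every $b<B$, which is precisely (a), with smoothness of $g$ a byproduct of the same differentiation under the integral.

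I expect the single genuine obstacle to be the justification of the contour shift in (b) $\Rightarrow$ (a): one must check that the two vertical segments joining $\mathbb R$ to $\mathbb R + iy_0$ at real parts $\pm R$ contribute nothing as $R\to\infty$, which is exactly where the uniform-in-$y$ rapid decay hypothesis $\sup_{|y|<b}\sup_x |x|^k|h(x+iy)|<\infty$ is used (applied on the compact parameter interval between $0$ and $y_0$). The remaining points — differentiation under the integral sign and the Cauchy-estimate upgrade to the full Schwartz property in (a) $\Rightarrow$ (b) — are routine once the exponential and rapid-decay bounds are in hand.
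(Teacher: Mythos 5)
Your proof is correct and follows essentially the same route as the paper: the identity $\hat g(x+iy)=\widehat{e^{yt}g(t)}(x)$ combined with Leibniz and uniform $L^1$ bounds (obtained by borrowing a slightly larger $b'<B$) gives (a)$\Rightarrow$(b), and a contour shift across the strip, justified by the uniform rapid decay killing the vertical segments, gives (b)$\Rightarrow$(a). The only differences are cosmetic: the paper performs the contour shift weakly, pairing $h$ against test functions $\hat\phi$ to establish $\widehat{e^{yt}g(t)}(x)=h(x+iy)$ before taking sup bounds, whereas you shift the contour directly in the inversion integral and differentiate under it; your Cauchy-estimate argument for the Schwartz property of $h$ also makes explicit a point the paper leaves implicit.
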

\begin{proof}
\textit{(a)$\Rightarrow$(b)}. Observe that $\hat g(x+iy)=\widehat{e^{yt}g(t)}(x)$. Thus the existence of analytic extension is clear, and we can write
\[ |x|^k |h(x+iy)|=|x|^k|\widehat{e^{yt}g(t)}(x)|=|\widehat{\left(e^{yt}g(t)\right)^{(k)}}(x)|\]
Note that \[\left(e^{yt}g(t)\right)^{(k)}=e^{yt}\sum_{j=0}^k P_{j,k}(y) g^{(j)}(t)\]
where $P_{j,k}$ denotes some universal polynomial of degree $\leq k$.
Therefore
\[ \sup_x |x|^k |h(x+iy)|\leq \int_{-\infty}^\infty\left|e^{yt}\sum_{j=0}^k  P_{j,k}(y) g^{(j)}(t)\right|dy\]
The sum is finite, so we can bound every term separately. Choose $b<Y<B$, $\epsilon=Y-b$. Then
\[\sup_{|y|<b}\int_{-\infty}^\infty|e^{yt}g^{(j)}(t)|\leq C(j, Y)\int_{-\infty}^\infty e^{-\epsilon |t|}dt<\infty\]
\textit{(b)$\Rightarrow$(a)}. Note that $g$ is a Schwartz function since $h$ is. It sufficies to show (by induction) that supremums of $|(e^{yt}g)^{(k)}|$ are finite for every $k$ and $b<B$. Notice that $\widehat{g^{(k)}}(x)=(ix)^k\hat g(x)$ has an analytic extension to the strip $|y|<B$ (namely: $(iz)^kh(z)$), satisfying the same conditions as $h$ itself.
Now take a $C^\infty$ compactly supported function $\phi$ on $\mathbb R$.
We will show that
\begin{equation}\label{paley}\int_{-\infty}^\infty e^{yt}g(t)\overline{\phi(t)}dt=\int_{-\infty}^\infty h(x+iy)\overline{\hat \phi(x)}dx\end{equation}
implying \[\widehat{e^{yt}g(t)}(x)=h(x+iy)\] and therefore for any $k$
\[\widehat{e^{yt}g^{(k)}(t)}(x)=i^k(x+iy)^kh(x+iy)\] which is equivalent to having
\[\widehat{\left(e^{yt}g(t)\right)^{(k)}}(x)=i^kx^kh(x+iy)\]
Indeed, $\psi=\hat\phi$ is an analytic function satisfying the supremum condition by the "$(a)\Rightarrow(b)$" implication. Then
\[\int_{-\infty}^\infty e^{yt}g(t)\overline{\phi(t)}=\int_{-\infty}^\infty \hat g\overline{\widehat{e^{yt}\phi(t)}}dt=
\int_{-\infty}^\infty h(x)\overline{\psi(x+iy)}dt\]
Observe that $\lambda(z)=h(z)\overline{\psi(iy+\overline z)}$ is an analytic function, and the integrals over the intervals $Re (z)=\pm R$, $-b<Im(z)<b$ of $\lambda(z)$ converge to 0 as $R\rightarrow\infty$ by the uniform bounds on $h$ and $\psi$. Considering the line integral of $\lambda$ over a rectangle with these vertical sides and horizontal lines at $Im(z)=0$ and $Im(z)=y$, we get
\[\int_{-\infty}^\infty h(x)\overline{\psi(iy+x)}=\int_{-\infty}^\infty h(x+iy)\overline{\psi(x)}\]
which proves (\ref{paley}). Finally,
\[\sup_{|y|<b}\sup_t |(e^{yt}g(t))^{(k)}|\leq\sup_{|y|<b}\int_{-\infty}^\infty |x|^kh(x+iy)dx\]
which is finite by the assumptions.\end{proof}
\noindent
Let $\mathcal S_0$ be the following class of "Schwartz" functions in  $L^2[0,\infty)$
\[\mathcal S_0=\{f\in C^\infty: \sup |x|^n|f^{(k)}(x)|<\infty \mbox{ }\forall k\geq0\mbox{ ,}\forall n\in\mathbb Z\}\]
Note that $n\in\mathbb Z$ can be negative. Observe that $\mathcal
S_0\subset Dom(B)$.
\begin{prop} Assume $(a_n)$ satisfies $\sum |a_n|n^\epsilon<\infty$ for some $\epsilon>0$, and the convolution inverse $(b_n)$ satisfies $\sum |b_n|/\sqrt n<\infty$. Next, assume that \[L(1/2+iz; a_n)/\overline{L(1/2+i\overline {z};a_n)}\] (which is meromorphic by assumption in the strip $|y|<1/2+\epsilon$) satisfies the following polynomial growth condition:  there exist constants $N$ and $C$ such that
\[\left|\frac{L(1/2-y+ix; a_n)}{L(1/2+y+ix;a_n)}\right|\leq C_0+C_1 |x|^{N}\] for all $x,y\in\mathbb R$, $|y|\leq 1/2+\epsilon/2$.
Let $f\in \mathcal S_0$. Then $\mathcal F(a_n) B f + B\mathcal F(a_n) f=0$.\end{prop}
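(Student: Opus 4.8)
The plan is to pass to the diagonalized picture of Section 5, where both operators become almost trivial, and to concentrate the real effort on the domain question. Conjugating by the isometry $w=v\circ u$, I would first compute $\widetilde B=wBw^{-1}$. Since $u(Bf)(y)=i\bigl(g'(y)+g(y)/2\bigr)$ for $g=u(f)$, and the Fourier transform sends this to $\bigl(-\omega+i/2\bigr)\hat g(\omega)$, evaluating at $\omega=x+i/2$ (exactly the shift built into $v$) collapses the constant and yields $\widetilde B h(x)=-x\,h(x)$; thus under $w$ the operator $B$ is just multiplication by $-x$. Combined with (\ref{Fourier_Poisson}), which gives $\widetilde{\mathcal F(a_n)}h(x)=m(x)\,h(-x)$ with $m(x)=L(1/2+ix;a_n)/\overline{L(1/2+ix;a_n)}$ of modulus $1$, the desired relation becomes the pointwise identity $\widetilde B\,\widetilde{\mathcal F(a_n)}+\widetilde{\mathcal F(a_n)}\,\widetilde B=0$: indeed $(\widetilde B\widetilde{\mathcal F(a_n)}h)(x)=-x\,m(x)h(-x)$ while $(\widetilde{\mathcal F(a_n)}\widetilde B h)(x)=m(x)\,(-(-x))h(-x)=+x\,m(x)h(-x)$, and the two cancel. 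The flip $x\mapsto-x$ inside $\widetilde{\mathcal F(a_n)}$ is precisely what produces anticommutation rather than commutation.

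What is not free is that $\mathcal F(a_n)f$ must lie in $Dom(B)$ for $B\mathcal F(a_n)f$ to be meaningful, and that the conjugation $\widetilde B=(\text{mult.\ by }{-x})$ must be legitimate on $\mathcal F(a_n)f$ as well as on $f$; this is the heart of the matter and the only place the polynomial-growth hypothesis enters. Starting from $f\in\mathcal S_0$, the function $g=u(f)$ satisfies condition (a) of Lemma \ref{schwartz} for every $B$, so $\hat g$ is entire and Schwartz on each horizontal line, and $h=w(f)$ is Schwartz. Writing $g_{\mathcal F}=u(\mathcal F(a_n)f)$, the diagonal formula reads $\widehat{g_{\mathcal F}}(x+i/2)=m(x)\hat g(-x+i/2)$ on the real axis, and by analytic continuation $\widehat{g_{\mathcal F}}(s)=\mu(s-i/2)\,\hat g(-s+i)$, where $\mu$ is the meromorphic continuation of $m$. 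The hypothesis $\bigl|L(1/2-y+ix;a_n)/L(1/2+y+ix;a_n)\bigr|\le C_0+C_1|x|^N$ for $|y|\le 1/2+\epsilon/2$ says exactly that $\mu$ is holomorphic with polynomial growth on the strip $|Im|\le 1/2+\epsilon/2$; hence $\widehat{g_{\mathcal F}}$ is holomorphic on $-\epsilon/2\le Im(s)\le 1+\epsilon/2$ and, being a product of a polynomially bounded factor with one that is Schwartz uniformly on compact substrips, decays faster than any power in $Re(s)$ throughout that strip.

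From here Lemma \ref{schwartz} (the implication (b)$\Rightarrow$(a), applied on a symmetric substrip for smoothness and on lines up to $Im=1+\epsilon/2$ for the one-sided decay) shows that $g_{\mathcal F}$ is $C^\infty$ and that it and all its derivatives decay exponentially; in particular $\mathcal F(a_n)f\in C^\infty(0,\infty)$ and $i(g_{\mathcal F}'+g_{\mathcal F}/2)\in L^2(\mathbb R,e^y dm)$, i.e. $\mathcal F(a_n)f\in Dom(B)$. Moreover $g_{\mathcal F}$ again satisfies Lemma \ref{schwartz}(a) on a strip containing the line $Im=1/2$, so the computation of the first paragraph is valid for $\mathcal F(a_n)f$ as well, giving $\widetilde B\,(w\mathcal F(a_n)f)=-x\,(w\mathcal F(a_n)f)$. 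Conjugating $B\mathcal F(a_n)f+\mathcal F(a_n)Bf$ by $w$ then turns it into $\bigl(\widetilde B\widetilde{\mathcal F(a_n)}+\widetilde{\mathcal F(a_n)}\widetilde B\bigr)h=0$, and since $w$ is an isometry the original expression vanishes. The main obstacle is this middle step: controlling the analytic continuation of $\widehat{g_{\mathcal F}}$ off the critical line and extracting enough regularity and decay of $\mathcal F(a_n)f$ to land in $Dom(B)$. Everything else is either the routine change-of-variables bookkeeping for $w$ or the one-line pointwise cancellation in the diagonal picture.
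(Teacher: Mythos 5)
Your proof is correct, and it takes a genuinely different route from the paper's in its second half. The preparatory analysis is the same in both: push $f\in\mathcal S_0$ through Lemma \ref{schwartz}, combine the diagonal identity $\widehat{g_{\mathcal F}}(x+i/2)=m(x)\hat g(-x+i/2)$ with the polynomial-growth hypothesis to continue $\widehat{g_{\mathcal F}}$ analytically with rapid decay in the strip, and apply Lemma \ref{schwartz} in the direction (b)$\Rightarrow$(a) to conclude that $\mathcal F(a_n)f$ is smooth and that it and its derivatives decay exponentially in the logarithmic variable; this is exactly how the paper derives $g=O(x^{\delta})$ at $0$ and $g=O(x^{-1-\delta})$, $g'=O(x^{-2-\delta})$ at $\infty$. (A presentational point: rather than invoking the lemma on a non-symmetric strip, it is cleaner to do what the paper does and apply it to $G(t)=e^{t/2}g(e^t)$ on the symmetric strip $|y|<1/2+\epsilon/2$.) The endgame, however, differs. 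The paper returns to physical space: Corollary \ref{pointwise}(b) gives the pointwise formula $\sum\overline{a_n}g(nx)=(1/x)\sum a_nf(n/x)$, which is differentiated term by term and then unwound using Lemma \ref{bounded_lemma}, Corollary \ref{pointwise} applied to $xf'$, and a final application of $T(\overline{b_n})$. You instead stay in the conjugated picture: your computation $\widetilde Bh(x)=-x\,h(x)$ is correct (the constant $i/2$ cancels exactly against the shift built into $v$), so the anticommutation with $\widetilde{\mathcal F(a_n)}h(x)=m(x)h(-x)$ becomes a one-line multiplier identity whose sign comes from the reflection, and the only remaining issue is that the conjugation formula for $B$ holds on the two specific functions $w(f)$ and $w(\mathcal F(a_n)f)$ --- which your decay estimates justify via integration by parts. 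Your route is more conceptual: it exposes the mechanism (reflection implies anticommutation), avoids the series manipulations entirely, and uses $\sum|b_n|/\sqrt n<\infty$ only to identify $\mathcal F(a_n)$ with the operator of Theorem \ref{fourier2}, so it in fact proves the identity for that operator under nominally weaker hypotheses. The paper's route, by contrast, keeps the result visibly a consequence of the Poisson summation formula, which is how the proposition is motivated. One small bonus of your bookkeeping: you retain the reflection $h_f(-x)$ in the multiplier identity, which the paper's displayed formula $h_g(x+iy)=\cdots h_f(x+iy)$ drops; that omission is harmless there (only absolute values enter the decay estimates), but the reflection is precisely what drives your argument.
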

\begin{proof}  Denote $g=\mathcal F(a_n)f$. Denote $F(t)=e^{t/2}f(e^t)$ and $G(t)=e^{t/2}g(e^t)$, $h_f=\hat F$ and $h_g=\hat G$. The condition $f\in \mathcal S_0$ implies immediately that $F\in C^\infty$ and $\sup_{t\in\mathbb R} e^{yt}|F^{(k)}(t)|<\infty$ for all $y\in\mathbb R$, since $F^{(k)}(t)=P_k(e^{t/2}, f(e^t),...,f^{(k)}(e^t))$ for some fixed polynomial $P_k$. By Lemma (\ref{schwartz}), $h_f$ is a Schwartz function (on the real line), with an analytic extension to the strip $|y|<1$ such that \[\sup_{|y|\leq1}\sup_x |x|^k |h_f(x+iy)|<\infty\] for all $k\geq 0$. Next,
\[h_g(x+iy)=\frac{L(1/2-y+ix; a_n)}{\overline{L(1/2+y+ix;a_n)}}h_f(x+iy)\]
is an analytic function in the strip $|y|<1/2+\epsilon$. By the assumed bound on the L-function ratio, it is again a Schwartz function when restricted to the real line; and  \[\sup_{|y|<b}\sup_x |x|^k |h_g(x+iy)|<\infty\] for all $b<1/2+\epsilon/2$. Denote $\delta=\epsilon/4$. Again by Lemma (\ref{schwartz}), $G\in C^\infty$  and
satisfies $e^{(1/2+\delta )|t|}|G(t)|\leq C \iff |G(t)|\leq C e^{-(1/2+\delta)|t|} $
and likewise $ |G'(t)|\leq C e^{-(1/2+\delta)t}$ for some constant
$C$.  Then, as $t\to-\infty$, \[|g(e^t)|=e^{-t/2}|G(t)|\leq C e^{-t/2}  e^{(1/2+\delta)t}=O(e^{-\delta t})\]  and as $t\to+\infty$,
\[|g(e^t)|=e^{-t/2}|G(t)|\leq Ce^{-t/2}e^{-(1/2+\delta)t}=O(e^{-(1+\delta)t})\]Also, as $t\to+\infty$, \[|g'(e^t)|=\left|G'(t)-\frac{1}{2}G(t)\right|e^{-3t/2}=
O(e^{-(2+\delta)t})\]Thus $g\in C^\infty(0,\infty)$ and $g=O(x^\delta)$ as $x\rightarrow 0$,  $g=O(x^{-1-\delta})$ as $x\rightarrow \infty$ while
$g'=O(x^{-2-\delta})$ as $x\rightarrow\infty$. By Corollary \ref{pointwise} (b) we can write $\sum \overline{a_n}g(nx)=(1/x) \sum a_n f(n/x)$, and then the functions on both sides are $C^1$, and can be differentiated term-by-term. Carrying the differentiation out, we get
\[\sum \overline{a_n} (nx)g'(nx)=-(1/x)\sum a_n f(n/x)-(1/x^2)\sum a_n(n/x)f'(n/x)\]
Invoke Lemma \ref{bounded_lemma} to write
\[T(\overline{a_n})(xg')= -T(\overline{a_n})g-ST(a_n) (xf')\]
and then use Corollary \ref{pointwise} applied to $xf'$ to conclude
\[T(\overline{a_n})(xg')= -T(\overline{a_n})g-T(\overline{a_n})\mathcal F(a_n) (xf')\]
Finally, apply $T(\overline b_n)$ to obtain the announced result.
\end{proof}
\noindent\textit{Remark.} As an example of such a sequence, take $a_n=n^\lambda$, $\lambda<-1$.

\section{Acknowledgements}
I am indebted to Bo'az Klartag for the idea behind section 5, and
also for the motivating conversations and reading the drafts. I am
grateful to Nir Lev, Fedor Nazarov, Mikhail Sodin and Sasha Sodin
for the illuminating conversations and numerous suggestions. Also,
I'd like to thank my advisor, Vitali Milman, for the constant
encouragement and stimulating talks. Finally, I would like to thank
the Fields Institute for the hospitality during the final stages of
this work.

\end{document}